\title{Incidences of Cubic Curves in Finite Fields}
\author{Audie Warren}
\newcommand{\F}{\mathbb{F}}
\newtheorem{proposition}{Proposition}
\numberwithin{exercise}{subsection}
\newtheorem{lemma}{Lemma}
\newtheorem{theorem}{Theorem}
\newtheorem*{theorem*}{Theorem}
\newtheorem{corollary}{Corollary}
\begin{document}
 \maketitle
\begin{abstract}
In this paper we prove an incidence bound for points and cubic curves over prime fields. The methods generalise those used by Mohammadi, Pham, and Warren \cite{conicincidences}.
\end{abstract}
 \section{Introduction}
Given a set of points $P$ in the plane $\mathbb F^2$ over a field $\mathbb F$, and a set of irreducible algebraic curves $C$ in $\mathbb F^2$, the number of incidences between $P$ and $C$ is defined as 
$$I(P,C) := \{(p,\gamma) \in P \times C: p\in\gamma \}.$$
In the case $\mathbb F = \mathbb R$ and when $C$ is actually a set of lines $L$, an optimal upper bound for $I(P,L)$ was given by Szemer\'{e}di and Trotter \cite{Szemeredi1983}.
\begin{theorem}[Szemer\'{e}di-Trotter]\label{thm:sztr}
For any finite sets of points and lines $P$ and $L$ in the real plane, we have\footnote{In this paper we use the notation $A \ll B$ to mean that there exists an absolute constant $c>0$ such that $A \leq cB$. We have $B \gg A$ if  $A \ll B$.}
$$I(P,L) \ll (|P||L|)^{2/3} + |P|+ |L|.$$
\end{theorem}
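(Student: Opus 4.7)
The plan is to use Székely's elegant proof via the crossing number inequality, which delivers the bound in a few lines once that inequality is in hand.

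I would first build a plane graph $G$: its vertices are the points of $P$, and for each line $\ell \in L$ containing $k_\ell \geq 1$ points of $P$, the $k_\ell - 1$ consecutive arcs along $\ell$ (between successive points of $P \cap \ell$) form edges of $G$. Summing over lines, the edge count satisfies
$$|E(G)| = \sum_{\ell \in L} \max(k_\ell - 1, 0) \geq I(P,L) - |L|.$$
Since any two edges of $G$ can only cross at a point where two distinct lines of $L$ meet, the number of crossings in this drawing is at most $\binom{|L|}{2} \leq |L|^2$.

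I would then apply the crossing number inequality of Ajtai, Chv\'atal, Newborn and Szemer\'edi: if a simple graph has $n$ vertices and $m \geq 4n$ edges, then its crossing number is at least $c\, m^3/n^2$ for an absolute constant $c > 0$. Setting $n = |P|$ and $m = I(P,L) - |L|$ (assuming $m \geq 4|P|$), this yields
$$\frac{(I(P,L) - |L|)^3}{|P|^2} \ll |L|^2,$$
so $I(P,L) \ll |P|^{2/3}|L|^{2/3} + |L|$. In the complementary regime $I(P,L) - |L| < 4|P|$, the trivial bound $I(P,L) < 4|P| + |L|$ already supplies the additive $|P|$ term, and combining the two cases produces the Szemer\'edi--Trotter bound.

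The main obstacle is the crossing number inequality itself, which requires its own proof via Euler's formula combined with a random vertex-sampling argument; this is standard but non-trivial, and I would cite it rather than reprove it. A minor technical point is verifying that $G$ is a \emph{simple} graph (so that the inequality applies): two points of $P$ determine at most one line of $L$, and hence at most one arc between them, so no multi-edges arise.
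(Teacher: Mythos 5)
Your proof is correct, but note that the paper itself does not prove this theorem at all: it is quoted as background with a citation to Szemer\'edi and Trotter \cite{Szemeredi1983}, whose original argument proceeds by an intricate cell-decomposition of the plane. What you give is Sz\'ekely's crossing-number proof, a genuinely different and far shorter route. Your details check out: the graph is simple (two points span at most one line, hence at most one arc), edges drawn along lines never pass through a third vertex because they join \emph{consecutive} points of $P$ on a line, crossings occur only at intersections of distinct lines so their number is at most $\binom{|L|}{2}$, and the case split around the hypothesis $m \geq 4n$ of the Ajtai--Chv\'atal--Newborn--Szemer\'edi inequality correctly produces the additive $|P|$ and $|L|$ terms. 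The trade-off between the two approaches is worth keeping in mind: Sz\'ekely's argument is short but outsources the real work to the crossing number inequality (Euler's formula plus random sampling), and it is tied to the topology of $\R^2$; the cell-decomposition method of the original paper is heavier but is the ancestor of partitioning techniques (including polynomial partitioning) that generalise to curves, higher dimensions, and other incidence problems --- indeed, over $\F_p$ neither approach survives, which is why the paper you are reading must take the entirely different dualisation route through Rudnev's point-plane bound and Stevens--de Zeeuw.
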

Over $\mathbb R$, this theorem has been generalised to other curves, the most well known such result being the Pach-Sharir theorem, see \cite{pachsharir1992}, \cite{pachsharir}. Such results for algebraic curves have also been proven over the complex numbers, see \cite{sheffer}.

In this paper we consider the case $\mathbb F = \mathbb F_p$ for prime $p$. In this setting, point-line incidence bounds analogous to Theorem \ref{thm:sztr} are known, the first such result being proved by Bourgain, Katz, and Tao \cite{BKT}. The state of the art point-line incidence bound is due to Stevens and de Zeeuw \cite{StevensdeZeeuw} - which itself relies on the point-plane incidence bound of Rudnev \cite{Rudnev}. Given that the sets of points and lines are not too large with respect to the characteristic $p$, they give the bound
\begin{equation}\label{sdzbound}
    I(P,L) \ll (|P||L|)^{11/15} + |P| + |L|.
\end{equation}
Using the basic geometric fact that two lines intersect in one point, and two points define one line, one can apply the K\H{o}v\'{a}ri-S\'{o}s-Tur\'{a}n theorem \cite{KST} to the incidence graph of $P$ and $L$ to obtain
$$I(P,L) \ll \min\{|P||L|^{1/2}+|L|, |P|^{1/2}|L| + |P| \}.$$
The bound \eqref{sdzbound} improves upon these bounds for a certain balancing of $|P|$ and $|L|$.

Obtaining incidence bounds between points and non-linear algebraic curves in $\mathbb F_p$ has proved a difficult task, with very few results being known. However, recently there has been a flurry of activity concerning incidences between points and certain degree two curves in $\mathbb F_p$, see for instance \cite{mishajames} and \cite{mobius}. Pushing the methods used in these papers further, an incidence bound between points and arbitrary irreducible conics was given in a paper of Mohammadi, Pham, and Warren \cite{conicincidences}.

In this paper, we adapt and generalise ideas present in \cite{conicincidences} to prove an incidence bound between points and arbitrary cubic curves in $\mathbb F_p$. Our main result is the following.

\begin{theorem}\label{thm:main1}
Let $P$ be a set of points in $\mathbb F_p^2$, with $|P|\leq p^{15/13}$, and let $C$ be any set of irreducible cubic curves in $\mathbb F_p^2$. Then we have
$$I(P,C) \ll \min\{(|P||C|)^{39/43},|P||C|^{9/10},|P|^{1/2}|C|\} + |P|+|C|.$$
\end{theorem}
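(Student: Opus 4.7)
The two bounds $I(P,C)\ll |P|^{1/2}|C|$ and $I(P,C)\ll |P||C|^{9/10}$ both follow from the K\H{o}v\'{a}ri--S\'{o}s--Tur\'{a}n theorem applied to the incidence bipartite graph $G(P,C)$. By B\'{e}zout's theorem (over $\overline{\mathbb{F}_p}$), two distinct irreducible cubic curves meet in at most nine points, so $G$ contains no $K_{2,10}$ in which the two vertices lie in $C$ and the ten in $P$. The two claimed bounds are just the two different Zarankiewicz estimates arising from this single forbidden substructure, picking up the trivial additive terms $|P|$ and $|C|$ respectively.

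For the main bound $I(P,C)\ll(|P||C|)^{39/43}$ I plan to adapt the strategy of Mohammadi--Pham--Warren \cite{conicincidences}. The starting observation is that a cubic is cut out by a ternary form of degree three with ten coefficients, so the variety of cubics is parametrised by a nine-dimensional projective space, and each $p\in P$ defines a hyperplane in this space whose incidences with the point representing a cubic $\gamma$ correspond exactly to the event $p\in\gamma$. To convert this high-dimensional incidence problem into one amenable to the Stevens--de Zeeuw bound \eqref{sdzbound} (or, after one further slice, to Rudnev's point-plane bound), I will dyadically decompose $C$ by incidence multiplicity and, within each dyadic class, use Cauchy--Schwarz to find many cubics sharing a common set of ``anchor'' points from $P$. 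Each anchor cuts the parameter space of cubics by one dimension; fixing an appropriate number of them reduces the problem to a point-line (or point-plane) incidence problem in low dimension, where the known bounds can be applied and then unwound.

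The main obstacle will be balancing the number of anchors against the power loss from the pigeonhole/Cauchy--Schwarz step: more anchors yield a lower-dimensional ambient problem with a stronger incidence bound, but at the cost of losing more incidences in the reduction. Optimising this trade-off against the $11/15$ exponent of Stevens--de Zeeuw is what should produce the exponent $39/43$; the hypothesis $|P|\leq p^{15/13}$ should enter precisely as the condition that keeps \eqref{sdzbound} in its non-trivial range after the reduction. A secondary technical difficulty is the presence of singular (nodal or cuspidal) irreducible cubics, for which the linear system of cubics through the chosen anchor points can degenerate; these will likely need to be handled either by restricting first to smooth cubics and dealing with singular ones separately via their rational parametrisations, or by showing that a generic choice of anchors avoids degeneracy.
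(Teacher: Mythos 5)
Your handling of the two K\H{o}v\'{a}ri--S\'{o}s--Tur\'{a}n bounds is correct and matches the paper: the single forbidden $K_{2,10}$ (two curves, ten points) supplied by B\'{e}zout gives both $|P|^{1/2}|C|+|P|$ and $|P||C|^{9/10}+|C|$, and the remaining term $(|P||C|)^{39/43}$ is exactly the content of Theorem \ref{thm:main2}. Your plan for that term also identifies the paper's strategy correctly in outline: parametrise cubics by $\mathbb P(\mathbb F_p^9)$, let each point of $P$ impose a hyperplane condition, fix anchor points to cut the dimension, and finish with Stevens--de Zeeuw. However, as a proof this has genuine gaps, because everything you leave to be ``optimised'' and ``unwound'' is where the actual work sits. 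First, the reduction is not a free parameter: the paper fixes \emph{exactly seven} anchors $S$, so that the cubics through $S$ form a $2$-flat $\pi_S$ in $\mathbb P(\mathbb F_p^9)$, and the assertion that this intersection of seven hyperplanes is complete needs a Cayley--Bacharach-type input (Proposition \ref{prop:fourcollinear}); the degenerate anchor configurations (four collinear points, or seven on a conic) are discarded not by a genericity argument but because B\'{e}zout forbids them from lying on any \emph{irreducible} cubic at all (Lemma \ref{lem:intersection}). Second, after mapping each remaining point $q$ to the line $\pi_q\cap\pi_S$, one must prove that at most two points of $P$ produce the same line: if three did, taking a second point on that line alongside the image of an irreducible curve of $C_{k,S}$ would give two distinct cubics sharing ten points, contradicting B\'{e}zout. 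Without this multiplicity bound, a $k$-rich curve need not become a $\gg k$-rich point of the dual line family, and the transfer to a point-line problem collapses. Third, the gluing over anchor sets is not a Cauchy--Schwarz step but a direct double count: summing $|C_{k,S}|$ over all seven-element subsets counts each $k$-rich curve at least ${k \choose 7}\gg k^7$ times, giving \eqref{eqn:kcounting}; one then applies the rich-point form of Stevens--de Zeeuw (Corollary \ref{cor:krichpoints}, which is where $|P|\leq p^{15/13}$ and $k\geq 11$ enter) and a dyadic summation with threshold $\Delta=\max\{11,|P|^{39/43}/|C|^{4/43}\}$ to extract the exponent $39/43$.

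Finally, your ``secondary technical difficulty'' is a red herring: singular irreducible cubics cause no problem anywhere in the argument, since every B\'{e}zout step needs only irreducibility, never smoothness, and no separate treatment via rational parametrisations is required. The degeneracy one actually has to confront lives in the configuration of the anchor points, and it is resolved by the irreducibility hypothesis itself, as described above.
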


In fact, we will prove the following bound.

\begin{theorem}\label{thm:main2}
Let $P$ be a set of points in $\mathbb F_p^2$, with $|P|\leq p^{15/13}$, and let $C$ be any set of irreducible cubic curves in $\mathbb F_p^2$. Then we have
$$I(P,C) \ll (|P||C|)^{39/43} + |P|^{71/43}|C|^{28/43}+|C|.$$
\end{theorem}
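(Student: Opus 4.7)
The plan is to adapt the approach of Mohammadi, Pham, and Warren from the conic case to cubics, using Rudnev's point-plane incidence theorem applied to a suitable $3$-dimensional parameter space of cubics. A plane cubic is defined (up to scaling) by a degree-$3$ polynomial with $10$ coefficients, giving a $9$-dimensional projective space $\mathbb{P}^9$ of all cubics. The key geometric observation is that passing through $6$ fixed points imposes (generically) $6$ linearly independent conditions, so the cubics through such a $6$-tuple form a $3$-dimensional projective linear subspace; after choosing an affine chart this becomes a copy of $\mathbb{F}_p^3$. Within this chart, a further point on the cubic becomes a linear condition, i.e.\ a plane in $\mathbb{F}_p^3$, which is precisely the setting of Rudnev's theorem.

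First I would dyadically pigeonhole so that each cubic $c \in C$ contains $\Theta(m)$ points of $P$, with $|C| m \sim I(P,C)$. Counting the quantity
\[
T := \#\{(\sigma, c) : \sigma \subset P,\ |\sigma|=6,\ c \in C,\ \sigma \subset c\} = \sum_{c \in C} \binom{|c\cap P|}{6} \gg |C| m^6 \sim \frac{I(P,C)^6}{|C|^5},
\]
and writing $T = \sum_\sigma N_\sigma$, where $N_\sigma$ is the number of cubics in $C$ through the $6$-tuple $\sigma$, I would apply Rudnev's theorem for each $\sigma$ to the $N_\sigma$ points (cubics) in $\mathbb{F}_p^3$ and the $O(|P|)$ planes coming from points in $P\setminus\sigma$. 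Summing the resulting bounds over $\sigma$, applying Cauchy-Schwarz or H\"older's inequality to control $\sum_\sigma N_\sigma^{1/2}$, and balancing the resulting expressions against $\sum_\sigma N_\sigma \le T$ would give the desired incidence estimate. The constraint $|P| \leq p^{15/13}$ is forced by demanding that the Rudnev error term $|X||\Pi|/p$ does not dominate.

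The main obstacle is controlling the collinearity parameter in Rudnev's theorem: a projective line in the parameter $\mathbb{P}^3$ corresponds to two extra linear conditions, which is to say a pencil of cubics through the $6$-tuple $\sigma$ together with two further specified points --- a pencil of cubics through $8$ points. A uniform bound on the number of cubics of $C$ that can lie in such a pencil is required. By the Cayley-Bacharach theorem any pencil of cubics through $8$ generic points shares a $9$th common base point and contains only finitely many reducible members, but the many irreducible members mean a naive bound is insufficient; one must either exclude exceptional $8$-tuples in a preliminary step, or feed a weaker incidence bound back into the argument iteratively — the second term $|P|^{71/43}|C|^{28/43}$ in the statement is almost certainly the residue of this pencil contribution. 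A secondary issue is confirming that enough $6$-tuples are in sufficiently general position that cubics through them genuinely span a $3$-dimensional linear family, so that degenerate $6$-tuples contribute only negligibly to $T$.
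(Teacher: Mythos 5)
Your proposal diverges from the paper's proof in one key structural choice, and the divergence is exactly where it breaks. The paper does not use six-point subsets and Rudnev's point--plane theorem; it uses \emph{seven}-point subsets $S$, so that the cubics through $S$ form a $2$-flat $\pi_S$ in $\mathbb{P}(\mathbb{F}_p^9)$, the remaining points of $P$ dualise to \emph{lines} in $\pi_S$ (with multiplicity at most $2$, by Bezout: three points mapping to one line would put ten common points on an irreducible cubic and a second cubic), and a $k$-rich curve becomes a $\frac{k-7}{2}$-rich point. The count is then finished with the Stevens--de Zeeuw bound on $t$-rich points for lines in $\mathbb{F}_p^2$, which requires no non-collinearity hypothesis whatsoever. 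This choice of seven rather than six is precisely what makes the ``main obstacle'' you identify disappear. In your $3$-dimensional setting the collinearity parameter in Rudnev's theorem equals the maximum number of curves of $C$ lying in a common pencil, and this is bounded by no constant: take two cubics meeting in nine points and let $C$ consist of (up to $\sim p$) irreducible members of the pencil they span; if $P$ contains the nine base points, then for every $6$-tuple $\sigma$ of base points \emph{all} of $C$ is collinear in your parameter space, and Rudnev degenerates to the trivial bound $k|\Pi| \approx |C||P|$. Worse, the $6$-tuples with large $N_\sigma$ --- the only ones carrying weight in your count $T$ --- are exactly the ones vulnerable to this degeneracy, so it cannot be dismissed as affecting negligibly many tuples. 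Your two suggested remedies (excluding exceptional $8$-tuples, or iterating a weaker bound) are not carried out, and making either work would require a genuine additional dichotomy (for instance, exploiting that curves in a common pencil meet only in the base locus and hence have essentially disjoint point sets); as written, the proof has a hole at its central step.

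Two smaller points. First, your guess about the origin of the term $|P|^{71/43}|C|^{28/43}$ is incorrect: in the paper it is the residue of the \emph{linear} term $|L|/t$ of the Stevens--de Zeeuw rich-point bound after dyadic summation, namely $|P|^8/\Delta^7$ with $\Delta = |P|^{39/43}/|C|^{4/43}$, and has nothing to do with pencils. Second, even granting the collinearity issue resolved, you never verify that your balancing reproduces the exponent $39/43$; a direct execution of your scheme (Cauchy--Schwarz on $\sum_\sigma N_\sigma^{1/2}$ against $T \gg |C|m^6$, with $\sum_\sigma I_\sigma \gg |C|m^7$) yields a bound of the shape $|P||C|^{7/8}$, which does not match the statement, so the claim that the desired estimate ``would'' follow from balancing is unsubstantiated.
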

It is again important to compare this result with the trivial bounds given by K\H{o}v\'{a}ri-S\'{o}s-Tur\'{a}n. As above, this is given by the basic fact that two irreducible cubic curves intersect in at most nine points. This yields
$$I(P,C) \ll \min \{ |P||C|^{9/10} + |C|, |P|^{1/2}|C| + |P| \}.$$
Comparing these bounds to the first term in Theorem \ref{thm:main2}, we see that Theorem \ref{thm:main2} improves upon the trivial bounds when we have
$$|P|^{35/8} \leq |C| \leq |P|^{40/3},$$
and within this range the second term of Theorem \ref{thm:main2} is dominated by the first. Theorem \ref{thm:main1} is then the augmentation of Theorem \ref{thm:main2} with the K\H{o}v\'{a}ri-S\'{o}s-Tur\'{a}n bounds. We note that although we have focused on $\F_p$, the results extend to other fields, with the same restriction on the size of $P$ with respect to the characteristic $p$, and also to fields of characteristic zero by ignoring the restriction on the characteristic.

We mention that it is crucial to restrict to irreducible curves in Theorem \ref{thm:main1} (and such incidence results in general), as otherwise $I(P,C) = |P||C|$ is obtainable. Take a single line $l$, and let all of $P$ lie on $l$. Define a set of reducible cubic curves $C$, where each is the union of $l$ with some other conic. Since every point lies on $l$, which is a component of every cubic in $C$, the number of incidences is $|P||C|$. 

\section{Proof of Theorem \ref{thm:main2}}
\subsection{The set-up}
We now begin the proof of Theorem \ref{thm:main2}. The main idea will be to, in a certain sense, dualise the points and curves $P$ and $C$, so that we recover point and line incidences. However, we will not work with incidences directly, choosing to instead work with $k$-rich curves. A curve $\gamma \in C$ is called $k$-rich if it contains between $k$ and $2k$ points of $P$, that is,
$$k \leq |\gamma \cap P| < 2k.$$
We let $C_k \subseteq C$ be the set of $k$-rich curves from $C$. Our main goal will be to bound, for all $k$ sufficiently large, $|C_k|$. This will be achieved by first considering the problem locally.

Let $S \subseteq P$ be a set of seven points. We make the definition
$$C_{k,S} := \{ \gamma \in C_k : \forall q \in S, q \in \gamma  \}.$$
In words, this is the set of $k$-rich curves which pass through all points of $S$. Given a bound for each $C_{k,S}$, we can give a bound on $C_k$. Indeed, if we sum over all subsets $S \subseteq P$ of size seven, each $k$-rich curve will be counted at least ${k \choose 7} \gg k^7$ times, noting that this assumes $k \geq 7$. This implies that we have the inequality
\begin{equation}\label{eqn:kcounting}
|C_k| \ll \frac{1}{k^7}\sum_{\substack{S \subseteq P \\ |S| = 7}} |C_{k,S}|.\end{equation}
We now begin the main part of the proof, which is to bound $|C_{k,S}|$.
\subsection{Bounding $C_{k,S}$}
To begin the dualisation process, we provide a map $\phi$ which sends our curves $C$ to points in $\mathbb P(\mathbb F_p^9)$. The map is very simple - it takes a curve $f(x,y)=0$ to its list of coefficients. Note that this is a map into projective space since constant multiples of an equation $f(x,y)=0$ determine the same curve.
$$\phi: \{\text{Curves of degree at most 3 over } \mathbb F_p^2 \} \longrightarrow \mathbb P(\mathbb F_p^9) $$
$$\sum_{\substack{(i,j) \\ i+j \leq 3}} c_{i,j}x^iy^j=0 \longrightarrow [c_{0,0}: c_{0,1}: ... :c_{2,1}: c_{3,0}].$$
The ordering chosen for the coordinates is irrelevant - we simply fix an ordering and use it consistently.

Fix a point $q = (q_1,q_2) \in \mathbb F_p^2$. If we let $\Gamma_q$ be the set of all degree at most 3 curves passing through $q$, then the image $\phi(\Gamma_q)$ is a hyperplane in $\mathbb P(\mathbb F_p^9)$, since the point $q$ imposes a single linear condition on the coefficients of the curves. Indeed, the points $ [X_{0,0}: X_{0,1}: ... :X_{2,1}: X_{3,0}] \in \phi(\Gamma_q)$ are precisely those that satisfy the linear equation
$$\sum_{\substack{(i,j) \\ i+j \leq 3}} X_{i,j}q_1^iq_2^j=0.$$
We denote such a hyperplane by $\pi_q$. We now take our set $S\subseteq P$ of size seven, and look at the image under $\phi$ of \emph{all} degree at most 3 curves which pass through the points of $S$, call them $\Gamma_S$. From the above, this is given by
$$\phi(\Gamma_S) = \bigcap_{q \in S} \pi_q.$$
We prove a lemma to control this image.
\begin{lemma} \label{lem:intersection}
Let $S\subseteq P$ be a set of seven points. Then either $\phi(\Gamma_S)$ is a $2$-flat, or $C_{k,S}$ is the empty set.
\end{lemma}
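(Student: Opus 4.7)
The plan is to prove the contrapositive: assuming $C_{k,S}$ is non-empty, I will show that $\phi(\Gamma_S)$ is exactly a $2$-flat. Since the space of cubic polynomials has vector-space dimension $10$, the ambient projective space has dimension $9$, and the intersection $\bigcap_{q\in S}\pi_q$ of seven hyperplanes automatically has projective dimension at least $9-7=2$. It therefore suffices to show that the seven hyperplane conditions imposed by the points of $S$ are linearly independent.

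Fix an irreducible cubic $\gamma\in C_{k,S}$. By Bezout's theorem, $\gamma$ meets any line in at most three points, for otherwise the line would be a component of $\gamma$, contradicting irreducibility. In particular, no four points of $S$ are collinear.

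To establish independence, I construct, for each $q\in S$, a reducible cubic of the form $\ell\cdot Q$ (line times conic) which passes through $S\setminus\{q\}$ but not through $q$. First pick two points $p_1,p_2\in S\setminus\{q\}$ whose joining line $\ell$ avoids $q$: the no-four-collinear condition restricts the ``bad'' pairs (those whose joining line passes through $q$) to at most one per collinear triple of $S$ containing $q$, and since each line through $q$ contains at most two further points of $S$, at least $\binom{6}{2}-3$ pairs are admissible. Next, consider the pencil of conics through the remaining four points $r_1,\ldots,r_4$ of $S\setminus\{q\}$. A brief case analysis on whether three of the $r_i$ are collinear shows that the base locus of this pencil is either $\{r_1,\ldots,r_4\}$ or the union of a line containing the three collinear ones with the fourth point; in either case the no-four-collinear hypothesis forbids $q$ from being in the base locus, so the pencil contains a conic $Q$ avoiding $q$. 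The product $\ell\cdot Q$ is then a cubic through $S\setminus\{q\}$ but not through $q$.

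Running this construction for each $q\in S$ shows the seven hyperplanes $\pi_q$ are linearly independent, so $\phi(\Gamma_S)$ has projective dimension exactly $2$, completing the proof. The main obstacle I anticipate is the base-locus argument for the conic pencil, which amounts to verifying that $q$ cannot be forced onto every conic through $\{r_1,\ldots,r_4\}$; this is handled by a short Bezout-style analysis using that a conic containing three collinear points must split off their common line, together with the no-four-collinear condition on $S$.
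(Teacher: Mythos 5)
Your proof is correct, and it takes a more self-contained route than the paper's. The skeleton is shared: both arguments use Bezout's theorem together with an irreducible cubic in $C_{k,S}$ to rule out four collinear points of $S$, and both reduce the lemma to showing that the seven points impose linearly independent conditions, so that $\bigcap_{q\in S}\pi_q$ has projective dimension exactly $9-7=2$. The difference is how that independence is obtained: the paper simply invokes Proposition \ref{prop:fourcollinear}, a Cayley--Bacharach-type result quoted from the literature (with the caveat that the cited proof is stated for sufficiently large fields), whereas you prove it directly by exhibiting, for each $q\in S$, a separating cubic $\ell\cdot Q$ through $S\setminus\{q\}$ avoiding $q$ --- the standard dual-basis criterion for linear independence of the evaluation functionals $f\mapsto f(q)$. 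Your two supporting counts are sound: at most three pairs of $S\setminus\{q\}$ span lines through $q$ (no line through $q$ contains more than two further points of $S$), and the base locus of the conics through the remaining four points is either those four points or the union of a line through three of them with the fourth, and in neither case can it contain $q$, by the no-four-collinear condition. Note that your construction exploits the stronger no-four-collinear hypothesis (the cited proposition assumes only no five collinear), which is exactly what makes the hands-on argument easy, and which is available here precisely because $C_{k,S}\neq\emptyset$; your argument also works over any field, with no size restriction. The only thing you omit relative to the paper's proof is a side fact it establishes in passing --- that $\Gamma_S$ contains no curves of degree less than three, since a conic through all of $S$ would violate Bezout against an irreducible cubic --- which the paper needs later but which is not part of the lemma's statement, so its absence is not a gap.
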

In order to prove this, we require a simple proposition. The following is a version of a result present in \cite{cayleybacharach} - a proof can be found there which is valid over sufficiently large fields.
\begin{proposition} \label{prop:fourcollinear}
Let $S$ be a set of points in $\mathbb F_p^2$. 
\begin{itemize}
    \item If $|S|=7$ and $S$ contains no five collinear points, then $S$ imposes independent conditions on the set of all cubic curves.
    \item If $|S|=8$ and $S$ contains no five collinear points and are not all one a common conic, then $S$ imposes independent conditions on the set of all cubic curves.
\end{itemize}
\end{proposition}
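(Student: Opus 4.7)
The plan is to use the standard reduction that $S$ imposes independent conditions on cubics if and only if, for every $p_0 \in S$, there exists a cubic curve passing through $S \setminus \{p_0\}$ but not through $p_0$. This is pure linear algebra: identifying the $10$-dimensional space $V$ of cubic polynomials in two variables, independence of the evaluation functionals $\{\mathrm{ev}_p : p \in S\}$ on $V$ is equivalent to surjectivity of the evaluation map $V \to \F_p^{|S|}$, and hence to each standard basis vector lying in the image. I will therefore construct such \emph{witness} cubics explicitly for each $p_0 \in S$, choosing reducible cubics — unions of three lines, or of a line and a conic.

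For the first bullet ($|S|=7$), fix $p_0 \in S$ and set $\{p_1,\dots,p_6\} = S \setminus \{p_0\}$. Define a graph $G$ on $\{p_1,\dots,p_6\}$ whose edges are those pairs $\{p_i,p_j\}$ with $p_0, p_i, p_j$ collinear. The hypothesis that no five points of $S$ are collinear forces each line through $p_0$ to contain at most three of the $p_i$, and so every vertex of $G$ has degree at most $2$. The complement $\bar G$ on six vertices therefore has minimum degree at least $3$, and Dirac's theorem supplies a Hamilton cycle in $\bar G$; picking alternate edges yields a perfect matching $\{p_{i_k},p_{j_k}\}_{k=1,2,3}$. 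The three lines $\overline{p_{i_k}p_{j_k}}$ each avoid $p_0$ by construction, so their union is a reducible cubic through the six $p_i$ which misses $p_0$.

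For the second bullet ($|S|=8$), fix $p_0 \in S$, set $\{q_1,\dots,q_7\} = S \setminus \{p_0\}$, and split into two cases. If some four of the $q_i$ lie on a common line $\ell$, then $\ell$ does not pass through $p_0$ (else five points of $S$ would be collinear), and I need to cover the remaining three points $r_1, r_2, r_3$ by two more lines missing $p_0$. The only obstruction is when $\{p_0,r_1,r_2,r_3\}$ is itself collinear on some line $\ell'$; but in that case $S = (S\cap \ell) \cup (S\cap\ell')$ lies on the reducible conic $\ell \cup \ell'$, directly contradicting the no-common-conic hypothesis. Outside this forbidden configuration a pair $\{r_i,r_j\}$ whose connecting line avoids $p_0$ always exists, and completing with any line through the remaining $r_k$ avoiding $p_0$ gives the needed cubic. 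If no four of the $q_i$ are collinear, I would instead look for a witness of the shape $\overline{q_iq_j} \cup \cC$, where $\cC$ is the generically unique conic through the other five $q_k$; over the $\binom{7}{2}=21$ choices of pair, only a bounded number are ruled out, either because $\overline{q_iq_j}$ passes through $p_0$ (controlled by the no-five-collinear and the no-four-of-$q_i$-collinear assumptions, forcing at most three $q_i$ on any line through $p_0$) or because the six-subset $\{p_0\} \cup \{q_k : k\neq i,j\}$ is co-conic (strongly restricted by the no-common-conic hypothesis), and a direct counting argument guarantees a surviving choice.

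The main obstacle is the second bullet: the case analysis is substantially more delicate, and one must handle edge cases where the ``conic through five points'' is itself reducible, or where several degenerate collinearity configurations overlap. The deeper point is that the no-common-conic hypothesis is there precisely to block a Bezout-type obstruction: if $S$ lay on a reducible conic $\ell \cup \ell'$, any cubic through seven points of $S$ distributed across the two lines would be forced to contain both components, and hence the eighth point as well — so no witness cubic could exist. Making the $\binom{7}{2}$-pair counting fully rigorous, and verifying that the co-conic bad pairs are few enough to leave a good choice, will be the most technical step.
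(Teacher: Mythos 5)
You should know at the outset that the paper does not actually prove Proposition \ref{prop:fourcollinear}: it cites \cite{cayleybacharach} for a proof valid over sufficiently large fields. So your attempt is not reconstructing a hidden argument but supplying a self-contained one, and much of it is right. The linear-algebra reduction (independence is equivalent to producing, for each $p_0 \in S$, a cubic through $S \setminus \{p_0\}$ that misses $p_0$) is correct; your proof of the first bullet is complete, since the collinearity graph $G$ is a disjoint union of cliques of size at most three, so $\bar G$ has minimum degree three and Dirac's theorem yields the matching, giving three lines that cover the six points and avoid $p_0$; and Case A of the second bullet (four of the $q_i$ collinear) is also complete, including the correct observation that the only obstruction forces $S$ onto the reducible conic $\ell \cup \ell'$.

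The gap is exactly where you flagged it, and it is genuine: in Case B you never bound the number of pairs $\{i,j\}$ for which the conic $\cC_{ij}$ through the other five $q_k$ passes through $p_0$, and the phrase ``strongly restricted by the no-common-conic hypothesis'' badly understates the problem. Take $p_0$ at the origin, $q_1,q_2,q_3$ on a line $\ell_1$ through $p_0$, $q_4,q_5,q_6$ on a second line $\ell_2$ through $p_0$, and $q_7$ generic. All hypotheses hold (no five points of $S$ collinear, no four $q_i$ collinear, $S$ not co-conic), yet twelve of the twenty-one pairs fail: the three pairs inside $\{q_1,q_2,q_3\}$ and the three inside $\{q_4,q_5,q_6\}$ fail because the connecting line contains $p_0$, while each pair $\{q_i,q_7\}$ with $1 \le i \le 6$ fails because $\cC_{i7}$ contains three collinear points of $\ell_2$ (or $\ell_1$), hence contains that line, and is forced to be the reducible conic $\ell_1 \cup \ell_2 \ni p_0$. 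So the bad pairs are not few by any soft counting; worse, the bad conics here are reducible and pairwise share a whole line, so the natural tool for your count --- two distinct conics meet in at most four points --- simply does not apply. To close the gap you need a structural argument, for instance: two type-2 bad pairs sharing an index force either seven points of $S$ onto one conic, or a common line component through $p_0$ containing three of the $q_i$ (and there are at most two such lines); then in each resulting configuration one must exhibit a surviving pair --- in the example above the nine pairs $\{q_i,q_j\}$ with $i \le 3 < j \le 6$ all work, because a conic through $p_0$, two points of $\ell_1$ and two points of $\ell_2$ must equal $\ell_1 \cup \ell_2$ and so cannot contain $q_7$. That analysis is feasible, but it is the heart of the second bullet, and it is absent from your proposal.
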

The statement ``$S$ imposes independent conditions on the set of all cubic curves'' means that the intersection $\cap_{q \in S}\pi_q$ is complete, that is, has dimension two. Note that the only way this can fail to happen is if at some point one of these intersections were trivial, that is, a hyperplane $\pi_q$ contains the previous intersections $\cap_{q' \in S'}\pi_{q'}$ for some subset $S' \subset S$. If this happens, then every cubic curve passing through all of $S'$ also passes through $q$. We can now prove Lemma \ref{lem:intersection}. 

\begin{proof}[Proof of Lemma \ref{lem:intersection}]
Note that if $S$ were contained in a conic, we must have $C_{k,S} = \emptyset$, as otherwise this conic intersects an irreducible cubic curve in seven points. This implies that $\Gamma_S$ contains only cubic curves.  If $S$ contains four collinear points, then $S$ cannot be contained within any irreducible cubic curve, by Bezout's theorem, and therefore $C_{k,S} = \emptyset$.  On the other hand, if no four points of $S$ are collinear, then by Proposition \ref{prop:fourcollinear}, the intersections of the hyperplanes $\pi_q$ for $q \in S$ is complete, so that $\phi(\Gamma_S)$ is a $2$-flat.
\end{proof}
We continue the proof, assuming that $|C_{k,S}| \neq 0$, implying that $\phi(\Gamma_S)$ is a $2$-flat. Let $\pi_S$ denote this $2$-flat. We have that $\phi(C_{k,S}) \subseteq \pi_S$, and $\pi_S$ contains only points corresponding to cubic curves.

The next step is to give a map which sends our original points $P$ to lines in $\pi_S$. Since points not lying on any curve from $C_{k,S}$ do not contribute any incidences, we only perform this step for points which do indeed lie on curves from $C_{k,S}$ - by an abuse of notation we denote such points by $P \cap C_{k,S}$. Furthermore, we ignore the points of $S$, as they would be, in a certain sense, degenerate for this map. We define the map as follows.
$$\psi: (P \cap C_{k,S})\setminus S \rightarrow \{\text{lines in }\pi_S\}$$
$$\psi(q) = \pi_q \cap \pi_S.$$
We must justify, firstly, that $\psi(q)$ is indeed a line in $\pi_S$. Since we are intersecting a hyperplane with a $2$-flat, $\psi(q)$ can either be a line, as needed, or we have $\pi_q \cap \pi_S = \pi_S$. If this second case were to occur, it would mean that $\pi_S \subseteq \pi_q$, so that $S \cup \{q\}$ does \emph{not} impose independent conditions on cubic curves, which by Proposition \ref{prop:fourcollinear} implies that it contains five collinear points, or all eight are on a conic. In the first case, by removing $q$ we find at least four points of $S$ collinear, contradicting the assumption $|C_{k,S}| \neq 0$. In the second case, we must have that $S$ lies on a conic, again contradicting Bezout's Theorem unless $C_{k,s} = \emptyset$. We therefore conclude that $\psi(q)$ is indeed a line.

Secondly, we check the multiplicity of the lines $\psi(q)$. We claim that for each line $l$ lying in $\pi_S$, there are at most two points $q,q'$ which are both mapped to $l$, that is, these lines are defined with multiplicity at most two. To prove this, suppose there exist three points $q_1,q_2,q_3$ with $\psi(q_1)=\psi(q_2)=\psi(q_3)=:l$. Consider the set $S \cup \{q_1,q_2,q_3\}$. Since $q_1,q_2,q_3 \in (P \cap C_{k,S})\setminus S$, there must exist an irreducible cubic curve $\gamma \in C_{k,S}$ such that $\phi(\gamma) \in l$. Indeed, this follows since we have for all $q \in (P \cap C_{k,S})\setminus S$, and $\gamma \in C_{k,S}$,
$$q \in \gamma \iff \phi(\gamma) \in \psi(q).$$ Then $\gamma$ contains the ten points $S \cup \{q_1,q_2,q_3\}$. On the other hand, since $l$ is a line, we can take any point other than $\phi(\gamma)$ on $l$, and we find another (possibly reducible) cubic curve containing $S \cup \{q_1,q_2,q_3\}$. Since $\gamma$ is irreducible, this contradicts Bezout's theorem.

We now put together all of the above information, to recover an incidence problem between points and lines in $\mathbb F_p^2$. Take a $k$-rich curve $\gamma \in C_{k,S}$. It has been mapped to a point $\phi(\gamma) \in \pi_S$. Each point $q \in P \setminus S$ which lies on $\gamma$ has been sent, via $\psi$, to a line $\psi(q) \subseteq \pi_S$, and this line must contain the point $\phi(\gamma)$, since $q \in \gamma$. Such lines are defined with multiplicity at most two. Therefore, the $k$-rich curve $\gamma$ has been sent to an at least $\frac{k-7}{2}$-rich \emph{point} $\phi(\gamma)$, with respect to the lines $L := \psi((P \cap C_{k,S})\setminus S)$. We can now bound $|C_{k,S}|$ by the number of $\frac{k-7}{2}$-rich points defined by a set of $|L| \leq |P|$ lines in $\mathbb F_p^2 \cong \pi_S$. This is done via the following result of Stevens and de Zeeuw \cite{StevensdeZeeuw}.
\begin{corollary}\label{cor:krichpoints}
Let $L$ be a set of lines in $\F_p^2$, with $|L|\ll p^{15/13}$, and for $t\geq 2$ let $P_t$ denote the number of $t$-rich points with respect to $L$. Then
\[
|P_t|\ll \frac{|L|^{11/4}}{t^{15/4}} + \frac{|L|}{t}.
\]
\end{corollary}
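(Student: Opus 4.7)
\textbf{Proof plan for Corollary \ref{cor:krichpoints}.}
The plan is to deduce this $t$-rich points bound as a direct consequence of the Stevens--de Zeeuw point--line incidence bound \eqref{sdzbound}, using the standard double-counting trick. Let $\mathcal{P}_t$ be a set of $t$-rich points with respect to $L$. Since every point in $\mathcal{P}_t$ lies on at least $t$ lines of $L$, we have the lower bound
\[
    t\,|\mathcal{P}_t| \;\leq\; I(\mathcal{P}_t, L).
\]
On the other hand, because $|L|\ll p^{15/13}$ (and $|\mathcal{P}_t|\le |L|^2 \leq p^{30/13}$, well within the range in which Stevens--de Zeeuw is valid), the bound \eqref{sdzbound} applied to $\mathcal{P}_t$ and $L$ yields
\[
    I(\mathcal{P}_t, L) \;\ll\; (|\mathcal{P}_t|\,|L|)^{11/15} + |\mathcal{P}_t| + |L|.
\]

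Combining these, I obtain
\[
    t\,|\mathcal{P}_t| \;\ll\; (|\mathcal{P}_t|\,|L|)^{11/15} + |\mathcal{P}_t| + |L|.
\]
For $t$ larger than twice the implicit constant, the middle term $|\mathcal{P}_t|$ can be absorbed into the left-hand side, so that
\[
    t\,|\mathcal{P}_t| \;\ll\; (|\mathcal{P}_t|\,|L|)^{11/15} + |L|.
\]
I would then split into two cases according to which term on the right dominates. If the first term dominates, rearranging $t|\mathcal{P}_t| \ll |\mathcal{P}_t|^{11/15}|L|^{11/15}$ gives $|\mathcal{P}_t|^{4/15} \ll |L|^{11/15}/t$, and hence $|\mathcal{P}_t|\ll |L|^{11/4}/t^{15/4}$. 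If the second term dominates, then $|\mathcal{P}_t| \ll |L|/t$. Adding the two contributions gives exactly the claimed bound.

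The only mild subtlety is ensuring the absorption step is valid for all $t\geq 2$ rather than only $t$ larger than some constant depending on the Stevens--de Zeeuw implicit constant; if necessary I would cover the small remaining range of $t$ by the trivial observation that for $t$ in a bounded range the bound $|\mathcal{P}_t|\leq \binom{|L|}{2} \ll |L|^{11/4}/t^{15/4}$ follows immediately. Beyond this bookkeeping, there is no real obstacle: the corollary is a mechanical consequence of \eqref{sdzbound} via the standard incidence-to-richness dualisation.
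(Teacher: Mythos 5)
Your derivation is correct, and it is essentially the argument by which this statement is obtained in the literature: the paper itself gives no proof of Corollary \ref{cor:krichpoints}, citing it directly from Stevens and de Zeeuw \cite{StevensdeZeeuw}, where it is deduced from their incidence theorem (the bound \eqref{sdzbound}) by exactly the double-counting and case analysis you describe. Your algebra checks out: absorbing the $|\mathcal{P}_t|$ term for $t$ above a constant, the two cases give $|L|^{11/4}/t^{15/4}$ and $|L|/t$ respectively, and the remaining bounded range of $t$ is handled trivially by $|\mathcal{P}_t|\leq \binom{|L|}{2}$.

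The one point to tighten is your verification that \eqref{sdzbound} actually applies. The paper states that bound only with the vague proviso that the point and line sets are ``not too large'' relative to $p$; the precise condition in \cite{StevensdeZeeuw} is $|P|^{-2}|L|^{13} \ll p^{15}$. Your parenthetical justification --- that $|\mathcal{P}_t|\leq |L|^2 \leq p^{30/13}$ is ``well within the range'' --- does not engage with this condition, and it suggests the constraint is a separate upper bound on the number of points, which it is not. The correct observation is that the condition is \emph{hardest} to satisfy when the point set is smallest, so for any nonempty $\mathcal{P}_t$ one has $|\mathcal{P}_t|^{-2}|L|^{13}\leq |L|^{13}\ll p^{15}$, which holds precisely because of the hypothesis $|L|\ll p^{15/13}$. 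Indeed, this is why the corollary's hypothesis takes the form $|L|\ll p^{15/13}$: it is exactly what is needed for the incidence bound to be applicable to $L$ against an arbitrary point set. With that substitution, your proof is complete and coincides with the derivation in the original source.
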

Note that this is where the condition $|P|\ll p^{15/13}$ is adopted. Since we are applying this result with $t = \frac{k-7}{2}$, we must assume $k \geq 11$. This gives
$$|C_{k,S}| \ll \frac{|P|^{11/4}}{k^{15/4}} + \frac{|P|}{k}.$$
\subsection{Finishing the proof}
Returning to equation \eqref{eqn:kcounting}, we can bound the number of $k$-rich curves for $k \geq 11$ as 
$$|C_k| \ll \frac{|P|^{9 + 3/4}}{k^{10 + 3/4}} + \frac{|P|^8}{k^8}.$$
We can now follow a standard argument to bound $I(P,C)$. In the following we denote by $C_{=k}$ the set of \emph{precisely} $k$-rich curves.
\begin{align*}
    I(P,C) &= \sum_{k\geq 1} |C_{=k}|k \\
    & = \sum_{k \leq \Delta} |C_{=k}|k + \sum_{k > \Delta} |\mathcal{C}_{=k}|k \\   & \ll \Delta|C| + \sum_{i\geq 0}|C_{2^i\Delta}|(2^{i} \Delta) \\
    & \ll \Delta |C| + \sum_{i\ge 0}\left(\frac{|P|^{9+3/4}}{(2^{i} \Delta)^{10+3/4}} + \frac{|P|^8}{(2^{i} \Delta)^8}\right)(2^{i} \Delta) \\
    & \ll \Delta |C| + \frac{|P|^{9+3/4}}{\Delta^{9+3/4}} + \frac{|P|^8}{\Delta^7}.
\end{align*}
We now optimise our choice of $\Delta$. In order to ensure that the application of Corollary \ref{cor:krichpoints} was valid, we must have $\Delta \geq 11$. The best choice is then
$$\Delta= \max \left\{ 11, \frac{|P|^{39/43}}{|C|^{4/43}} \right\}.$$
If the second term is taken in this maximum, we recover the first two terms of Theorem \ref{thm:main2}. If the first term is chosen, then we must have $|C|^4 \gg |P|^{39}$, and in this case our bound gives $I(P,C) \ll |C|$. Combining these two possibilities yields Theorem \ref{thm:main2}.

\section*{Acknowledgements}
The author was partially supported by the Austrian Science Fund FWF
Project P-34180. I thank Niels Lubbes, Mehdi Makhul, Oliver Roche-Newton, Josef Schicho, and Ali Uncu for very helpful conversations.
\bibliography{cubicpaper}{}
\bibliographystyle{amsplain}
\end{document}